 \newtheorem{thm}{Theorem}[section]
 \theoremstyle{definition}
 \theoremstyle{remark}
 \numberwithin{equation}{section}
\begin{document}

\title[Bicomplex Lucas and Horadam Numbers]{Bicomplex Lucas and Horadam Numbers}

\author[Serp\.{i}l HALICI]{Serp\.{i}l HALICI}

\address{%
Pamukkale University,\\
Faculty of Arts and Sciences,\\
Department of Mathematics,\\
Denizli/TURKEY}

\email{shalici@pau.edu.tr}

%----------Author 2
\author[Adnan KARATA\c{S}]{Adnan KARATA\c{S}}
\address{%
Pamukkale University,\\
Faculty of Arts and Sciences,\\
Department of Mathematics,\\
Denizli/TURKEY}
\email{adnank@pau.edu.tr}

%----------classification, keywords, date
\subjclass{11B37, 11B39, 11B83, 11R52}

\keywords{Recurrences, Horadam Sequence and Generalized Fibonacci Sequence, Quaternions and Bicomplex numbers}
\begin{abstract}
In this work, we made a generalization that includes all bicomplex Fibonacci-like numbers such as; Fibonacci, Lucas, Pell, etc.. We named this generalization as bicomplex Horadam numbers. For bicomplex Fibonacci and Lucas numbers we gave some additional identities. Moreover, we have obtained the Binet formula and generating function for bicomplex Horadam numbers for the first time. We have also obtained two important identities that relate the matrix theory to the second order recurrence relations.

\end{abstract}
\maketitle
\section{Introduction}
After making studies for the complex numbers, W. R. Hamilton tried to define three dimensional algebra. Later, he recognized the idea behind the multiplication of imaginary units $i, j, k$ and defined real quaternions in 1844 \cite{hamilton1844}. Thereafter, there is a rising attention for four dimensional algebras. J. Cockle defined the split quaternions in 1848 \cite{cockle}. Inspired by the earlier studies, in 1892 \cite{Segre}, Segre defined bicomplex numbers. Bicomplex numbers neglected because of their zero divisors but now they are attracting attention of researchers.\\
For some basic and important work on bicomplex variables, the interested reader can look at the sources in \cite{banerjee2014fourier, agarwal2015tauberian, wang2013generalized, mursaleen2016approximation}. We should also mention that there are many studies about holomorphic functions which are with bicomplex variables. For some of these, one can look at references \cite{ price1991, luna2015bicomplex}. Also, there are some studies on bicomplex quatum mechanics \cite{rochon2004bicomplex,bender2002complex}, functional analysis with bicomplex scalars \cite{alpay2014basics}, computational studies \cite{toyoshima1998computationally, alfsmann2006families}, etc.
\\\\
Now, let us give definition and some fundamental properties of bicomplex numbers. The set of bicomplex numbers is
$$\mathbb{BC}=\{b=z_1+z_2 j | z_1, z_2 \in \mathbb{C}, j^2=-1  \}.$$
Addition of bicomplex numbers $b_1$ and $b_2$ is component-wise. And the multiplication of their basis elements can be done according to the following table.\\
		\begin{center} \begin{tabular}{ | c | c | c | c |}
	\hline	$.$ &  $i$ &  $j$ &  $k$\\ \hline
		$i$ &  $-1$  &  $k$ & $-j$ \\ \hline
		$j$ & $k$ &  $-1$  &  $-i$ \\ \hline
		$k$ &  $-j$ & $-i$ & $ 1$  \\  \hline \end{tabular} \end{center}
$$\mbox{\scriptsize Table.1 Multiplication of bicomplex numbers}$$
\\ where $ij=ji=k$. It should be noted that the multiplication of bicomplex numbers is similar to multiplication of real quaternions.  If we want to say some differences between these two sets of numbers, we can list them as bicomplex numbers are commutative, have zero divisors and non-trivial idempotent elements, but real quaternions are non-commutative and don't have zero divisors and non-trivial idempotent elements. That is, the following equations are satisfied for elements of the set $\mathbb{BC}$.
$$ij=ji=k,$$
$$(i+j)(i-j)=i^2-ij+ji-j^2=-1-k+k+1=0$$
and
$$\Big( \frac{1+k}{2} \Big) ^2=\frac{1+k}{2}.$$
Apart from these properties, bicomplex numbers have three different involutions and norms. For any bicomplex number $b$  these involutions are as follows.
$$b=a_1 + a_2 i + a_3 j + a_4 k,$$
$$\overline{b}_i=a_1 - a_2 i + a_3 j - a_4 k,$$
$$\overline{b}_j=a_1 + a_2 i - a_3 j - a_4 k,$$
$$\overline{b}_k=a_1 - a_2 i - a_3 j + a_4 k.$$ \\
Norms arising from the definitions of involutions are as follows.\\
$$N_i(b)=b \overline{b}_i=(a_1 + a_2 i + a_3 j + a_4 k) (a_1 - a_2 i + a_3 j - a_4 k),$$
$$N_j(b)=b \overline{b}_j=(a_1 + a_2 i + a_3 j + a_4 k) (a_1 + a_2 i - a_3 j - a_4 k),$$
$$N_k(b)=b \overline{b}_k=(a_1 + a_2 i + a_3 j + a_4 k) (a_1 - a_2 i - a_3 j + a_4 k).$$ \\
We should note all of these norms are isotropic. And for $N_i(\,\,)$ we can take $(1+k)$ and calculate the norm.
$$N_i(1+k)=(1+k)(1-k)=1^2-k+k-k^2=0.$$
Up to now we presented some fundamental properties of bicomplex numbers. In the following sections firstly we recall the definitions of bicomplex Fibonacci and Lucas numbers which is defined in the reference\cite{Nurkan}. Secondly, we give some important additional identities for the recurrence relations involving these numbers. And then, we define the bicomplex Horadam numbers which generalizes all Fibonacci-like recurrence relations on bicomplex numbers. In addition we give some fundamental identities for bicomplex Horadam numbers. Finally, in conclusion we summarize the paper and give a insight for further studies.
%
%             2.bölüm
%

\section{Bicomplex Fibonacci and Lucas Numbers}

In \cite{Nurkan}, bicomplex Fibonacci and Lucas numbers and their recurrence relations are studied. Let $BF_n$ and $BL_n$ be nth bicomplex Fibonacci and nth bicomplex Lucas numbers, respectively.
$$BF_n=f_n+f_{n+1}i+f_{n+2}j+f_{n+3}k, \,\,\,\, BF_{n+2}=BF_{n+1}+BF_{n}$$ and
$$BL_n=k_n+k_{n+1}i+k_{n+2}j+k_{n+3}k, \,\,\,\, BL_{n+2}=BL_{n+1}+BL_{n}$$
where $f_n$ and $k_n$ are the classical Fibonacci and Lucas numbers, respectively. Finding any element that provides recurrence relations is an important problem. The most convenient method to overcome this problem is to use the Binet formula or generating function. In \cite{Nurkan}, Binet formulas for bicomplex Fibonacci and Lucas numbers are given as follows, respectively.
\begin{equation} \label{Binf} BF_n=\frac{\underline{\alpha} \alpha^n - \underline{\beta} \beta^n}{\alpha - \beta} , \,\ \underline{\alpha}=1+\alpha i+\alpha^2 j+\alpha^3 k\end{equation} and
\begin{equation} \label{Binl} BL_n=\underline{\alpha} \alpha^n + \underline{\beta} \beta^n , \,\ \underline{\beta}=1+\beta i+\beta^2 j+\beta^3 k \end{equation}\\
where $\alpha$ and $\beta$ are roots of characteristic equation of the classical Fibonacci and Lucas numbers. We should especially mention here that the authors' Binet formula is identical to the formula for Fibonacci quaternions given by Halici in \cite{Hal1}.
In this section of the study, especially we will focus on bicomplex Lucas numbers and present some fundamental identities and equalities related with both bicomplex Fibonacci and Lucas numbers. Our first theorem is about the generating function of bicomplex Lucas numbers.
\begin{thm}
Generating function for bicomplex Lucas numbers is
\begin{equation} \label{eq:1} g(t)=\frac{2+i+3j+4k + (-1+2i+j+3k)t}{1-t-t^2}. \end{equation}
\end{thm}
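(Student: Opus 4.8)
The plan is to use the standard generating-function technique for a linear recurrence. First I would write
\begin{equation*}
g(t)=\sum_{n=0}^{\infty} BL_n\, t^n,
\end{equation*}
and form the two shifted series $t\,g(t)=\sum_{n=1}^{\infty} BL_{n-1}t^n$ and $t^2 g(t)=\sum_{n=2}^{\infty} BL_{n-2}t^n$, obtained by reindexing so that each term carries the power $t^n$. The goal is to compute the product $(1-t-t^2)g(t)$ coefficient by coefficient.

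The key step is that the recurrence $BL_{n+2}=BL_{n+1}+BL_n$ forces the coefficient of $t^n$ in $(1-t-t^2)g(t)$ to vanish for every $n\geq 2$, since that coefficient equals $BL_n-BL_{n-1}-BL_{n-2}=0$. Consequently only the $t^0$ and $t^1$ contributions survive, and I obtain
\begin{equation*}
(1-t-t^2)\,g(t)=BL_0+(BL_1-BL_0)\,t.
\end{equation*}

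Finally I would compute the two initial bicomplex Lucas numbers from the classical Lucas values $k_0=2,\ k_1=1,\ k_2=3,\ k_3=4,\ k_4=7$, which give $BL_0=2+i+3j+4k$ and $BL_1=1+3i+4j+7k$, whence $BL_1-BL_0=-1+2i+j+3k$. Substituting these into the displayed identity and dividing by $1-t-t^2$ produces exactly the asserted expression \eqref{eq:1}.

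I do not expect a genuine obstacle here: the computation is the routine closed-form derivation for a second-order recurrence, and the only points requiring care are treating $g(t)$ as a formal power series over $\mathbb{BC}$ (so that commutativity of the bicomplex scalars lets me factor out $1-t-t^2$ cleanly) and getting the two initial values right. If one prefers to avoid formal-series bookkeeping, an equivalent verification would be to multiply the claimed closed form by $1-t-t^2$, expand, and check that the resulting coefficients reproduce $BL_0$, $BL_1$, and satisfy the recurrence for $n\ge 2$; this is the step I would present in full, since everything else is immediate.
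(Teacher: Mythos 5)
Your proposal is correct and follows essentially the same route as the paper: form $g(t)$, $tg(t)$, $t^2g(t)$, use the recurrence $BL_{n+2}=BL_{n+1}+BL_n$ to collapse $(1-t-t^2)g(t)$ to $BL_0+(BL_1-BL_0)t$, and then substitute $BL_0=2+i+3j+4k$, $BL_1=1+3i+4j+7k$. In fact you supply the details the paper compresses into ``after the needed calculations,'' and your initial values and their difference $-1+2i+j+3k$ are all correct.
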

\begin{proof}
Let $g(t)$ be the generating function for bicomplex Lucas numbers as
$$g(t)=\sum^{\infty}_{n=0} BL_n t^n.$$
Using $g(t)$, $tg(t)$ and $t^2g(t)$ we get the following equations.
$$tg(t)=\sum^{\infty}_{n=0} BL_n t^{n+1}, \,\,\,\, t^2 g(t)=\sum^{\infty}_{n=0} BL_n t^{n+2}.$$
After the needed calculations the generating function for bicomplex Lucas numbers is obtained as
$$g(t)=\frac{BL_0 + (BL_1-BL_0)t}{1-t-t^2}.$$
Thus, the equation (\ref{eq:1}) is obtained.
\end{proof}
Catalan identity is one of the famous identities which is generalized form of Cassini identity. In the next theorem, we will present Catalan identity for bicomplex Lucas numbers.
\begin{thm}
Catalan identity for bicomplex Lucas numbers is
\begin{equation} \label{eq:2} BL_n^2 - BL_{n+r} BL_{n-r}=15(-1)^{n-r} f_r^2 (2j+k). \end{equation}
\end{thm}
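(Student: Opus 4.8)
The plan is to run everything through the Binet formula (\ref{Binl}), exactly as one proves Catalan's identity for the scalar Lucas numbers, and then to isolate the single bicomplex constant $\underline{\alpha}\,\underline{\beta}$ that carries all of the non-scalar information. Writing $BL_m=\underline{\alpha}\alpha^m+\underline{\beta}\beta^m$ and using that $\mathbb{BC}$ is commutative, I would expand both $BL_n^2$ and $BL_{n+r}BL_{n-r}$. The pure terms $\underline{\alpha}^2\alpha^{2n}$ and $\underline{\beta}^2\beta^{2n}$ occur identically in the two products and cancel in the difference, leaving
\[
BL_n^2 - BL_{n+r}BL_{n-r} = \underline{\alpha}\,\underline{\beta}\left[\,2(\alpha\beta)^n - \alpha^{n+r}\beta^{n-r} - \alpha^{n-r}\beta^{n+r}\,\right].
\]
The whole identity thus factors into a scalar bracket, to be simplified with the usual Fibonacci data, times the bicomplex constant $\underline{\alpha}\,\underline{\beta}$.

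For the scalar bracket I would pull out $(\alpha\beta)^{n-r}$ to get $(\alpha\beta)^{n-r}\bigl[2(\alpha\beta)^r-\alpha^{2r}-\beta^{2r}\bigr]$, recognize the inner expression as $-(\alpha^r-\beta^r)^2$, and then invoke $\alpha\beta=-1$, $(\alpha-\beta)^2=5$, together with $f_r=(\alpha^r-\beta^r)/(\alpha-\beta)$. This collapses the bracket to $(-1)^{n-r}\cdot(-5f_r^2)$, which already supplies the $(-1)^{n-r}$, the factor built from $5$, and the $f_r^2$ demanded by (\ref{eq:2}).

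The main obstacle, and the only genuinely bicomplex step, is the evaluation of $\underline{\alpha}\,\underline{\beta}$ with $\underline{\alpha}=1+\alpha i+\alpha^2 j+\alpha^3 k$ and $\underline{\beta}=1+\beta i+\beta^2 j+\beta^3 k$. I would multiply these out term by term through Table 1 and collect the four components, using $\alpha+\beta=1$, $\alpha\beta=-1$, $\alpha^2+\beta^2=3$ and $\alpha^3+\beta^3=4$. The scalar part $1-\alpha\beta-(\alpha\beta)^2+(\alpha\beta)^3$ and the $i$-part $(\alpha+\beta)(1-\alpha^2\beta^2)$ both vanish, while the $j$-part is $(\alpha^2+\beta^2)(1-\alpha\beta)=6$ and the $k$-part is $(\alpha^3+\beta^3)+\alpha\beta(\alpha+\beta)=3$. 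Hence $\underline{\alpha}\,\underline{\beta}=6j+3k=3(2j+k)$, which is exactly the direction $2j+k$ appearing on the right-hand side. Combining the two factors yields $3(2j+k)\cdot(-5)(-1)^{n-r}f_r^2$, i.e. a multiple $\pm 15(-1)^{n-r}f_r^2(2j+k)$. The delicate point throughout is sign bookkeeping: both the cross-term sign arising from the Lucas-type ``$+$'' in the Binet formula and the identity $(-1)^{n-r}=(-1)^{n+r}$ feed into the final sign, and this is the place where I would verify (\ref{eq:2}) most carefully.
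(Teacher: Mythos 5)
Your proposal follows exactly the route of the paper's own proof: expand through the Binet formula, cancel the pure $\underline{\alpha}^2\alpha^{2n}$ and $\underline{\beta}^2\beta^{2n}$ terms, factor the difference as $-\underline{\alpha}\,\underline{\beta}\,(\alpha\beta)^{n-r}(\alpha^r-\beta^r)^2$, and evaluate the scalar bracket with $\alpha\beta=-1$, $(\alpha-\beta)^2=5$ and $f_r=(\alpha^r-\beta^r)/(\alpha-\beta)$. In fact you are more complete than the paper: its proof never evaluates $\underline{\alpha}\,\underline{\beta}$ at all, whereas your term-by-term computation $\underline{\alpha}\,\underline{\beta}=6j+3k=3(2j+k)$ is precisely the step that produces the direction $2j+k$ and the factor $15$.

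The sign you flagged as delicate is not a defect of your argument; it exposes a genuine error in the stated theorem. Carried through without hedging, your computation gives
\[
BL_n^2-BL_{n+r}BL_{n-r}=3(2j+k)\cdot(-1)^{n-r}\cdot\bigl(-5f_r^2\bigr)=-15(-1)^{n-r}f_r^2(2j+k),
\]
that is, $15(-1)^{n-r+1}f_r^2(2j+k)$, opposite in sign to (\ref{eq:2}). A direct numerical check confirms your version: for $n=r=1$ one has $BL_1=1+3i+4j+7k$, $BL_2=3+4i+7j+11k$, $BL_0=2+i+3j+4k$, and then $BL_1^2=25-50i-34j+38k$, $BL_2BL_0=25-50i-4j+53k$, so $BL_1^2-BL_2BL_0=-30j-15k=-15(2j+k)$, whereas (\ref{eq:2}) predicts $+15(2j+k)$. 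The paper's proof reaches the correct intermediate expression $-\underline{\alpha}\underline{\beta}(\alpha\beta)^{n-r}(\alpha^{2r}-2(\alpha\beta)^r+\beta^{2r})$ and then silently drops the leading minus sign in its final line; the same sign error propagates into the Cassini identity stated after the theorem, which should read $15(-1)^n(2j+k)$ rather than $15(-1)^{n-1}(2j+k)$. So commit to the minus sign your derivation produces: your proof is then complete and correct, and it is the theorem's statement that needs the extra factor $(-1)$.
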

\begin{proof}
Using the Binet formula for bicomplex Lucas numbers (\ref{Binl})  and using the properties of $\alpha$ and $\beta$, we obtained the desired result;
$$BL_n^2 - BL_{n+r} BL_{n-r}=(\underline{\alpha} \alpha^n + \underline{\beta} \beta^n)^2-(\underline{\alpha} \alpha^{n+r} + \underline{\beta} \beta^{n+r})( \underline{\alpha} \alpha^{n-r} + \underline{\beta} \beta^{n-r})$$
$$=-\underline{\alpha}\underline{\beta} (\alpha \beta)^{n-r}(\alpha^{2r}-2(\alpha \beta)^{r}+\beta^{2r})$$
$$BL_n^2 - BL_{n+r} BL_{n-r}=15(-1)^{n-r} f_r^2 (2j+k)$$
\end{proof}
In \cite{Nurkan}, Cassini identity for bicomplex Lucas numbers is given in theorem $4$ with little mistake. So, the Cassini identity for bicomplex Lucas numbers can be obtained from equation (\ref{eq:2}) as
$$BL_n^2 - BL_{n+1} BL_{n-1}=15(-1)^{n-1}(2j+k).$$
In the next theorem we will derive various identities including bicomplex Fibonacci and Lucas numbers.
\begin{thm}
For bicomplex Fibonacci and Lucas numbers the following equations are satisfied.
\begin{equation} \label{eq:3} BF_n + BL_n = 2 BF_{n+1}   \end{equation}
%eq:3 doğru
\begin{equation} \label{eq:4} BL_{n-1} + BL_{n+1} = 5 BF_{n}   \end{equation}
%eq:4 doğru
\begin{equation} \label{eq:5} \sum ^n _{i=1} BL_{2i-1} = BL_{2n} - BL_{0}   \end{equation}
%eq:5 doğru
\begin{equation} \label{eq:6} \sum ^n _{i=1} BL_{i} = BL_{n+2} - BL_{2} \end{equation}
%eq:6 doğru
\begin{equation} \label{eq:7} \sum ^n _{i=1} BL_{2i} = BL_{2n+1} - BL_{1} \end{equation}
%eq:7 doğru
\begin{equation} \label{eq:8} 5 BF_{n}^2 - BL_{n} ^2 = 12 (-1)^{n+1} (2j+k) \end{equation}
\begin{equation} \label{eq:9} \sum ^n _{i=1} BL_{i}^2 = BL_n BL_{n+1} - (5 - 10i - 12j + 9k) \end{equation}
 $$BL_{n} BL_{m} + BL_{n+1} BL_{m+1}=$$
\begin{equation} \label{eq:10}  5(2 BF_{n+m+1}+2f_{n+m+4}-f_{n+m+1}-2f_{n+m+6}i-2f_{n+m+5}j+2f_{n+m+4}ij)   \end{equation}
\end{thm}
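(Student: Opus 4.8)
The plan is to separate the eight identities into those that are linear and those that are quadratic in the bicomplex sequences, and to treat each class with its own tool. The linear identities \eqref{eq:3}--\eqref{eq:7} I would handle componentwise. Since $BL_n$ and $BF_n$ are built by placing four consecutive classical numbers into the slots $1,i,j,k$, any relation that is linear with scalar coefficients reduces to the corresponding classical Fibonacci/Lucas identity applied in all four components at once. Thus \eqref{eq:3} follows from $f_m+k_m=2f_{m+1}$ and \eqref{eq:4} from $k_{m-1}+k_{m+1}=5f_m$, each used with $m=n,n+1,n+2,n+3$ (here $k_m$ denotes the scalar Lucas number). The three summation formulas \eqref{eq:5}--\eqref{eq:7} likewise follow by summing the scalar telescoping identities $\sum k_{2i-1}=k_{2n}-k_0$, $\sum k_i=k_{n+2}-k_2$, $\sum k_{2i}=k_{2n+1}-k_1$ in each component, or equivalently by a one-line induction on $n$ using $BL_{n+2}=BL_{n+1}+BL_n$.

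For the quadratic identity \eqref{eq:8} I would substitute the Binet formula \eqref{Binl}, using $(\alpha-\beta)^2=5$ so that the factor $5$ cancels the denominator in $BF_n^2$. Expanding $5BF_n^2$ and $BL_n^2$, the pure terms $\underline{\alpha}^2\alpha^{2n}$ and $\underline{\beta}^2\beta^{2n}$ cancel in the difference and only the mixed term survives, giving $5BF_n^2-BL_n^2=-4\,\underline{\alpha}\,\underline{\beta}\,(\alpha\beta)^n$. The whole computation then rests on the single constant $\underline{\alpha}\,\underline{\beta}=3(2j+k)$, obtained by multiplying out $(1+\alpha i+\alpha^2 j+\alpha^3 k)(1+\beta i+\beta^2 j+\beta^3 k)$ and repeatedly using $\alpha+\beta=1$, $\alpha\beta=-1$; together with $(\alpha\beta)^n=(-1)^n$ this yields $12(-1)^{n+1}(2j+k)$. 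Identity \eqref{eq:9} is cleanest by telescoping: commutativity of $\mathbb{BC}$ together with $BL_{i+1}-BL_{i-1}=BL_i$ gives $BL_i^2=BL_iBL_{i+1}-BL_{i-1}BL_i$, so that $\sum_{i=1}^n BL_i^2=BL_nBL_{n+1}-BL_0BL_1$, and it remains only to evaluate the constant $BL_0BL_1$ by direct multiplication from $BL_0=2+i+3j+4k$ and $BL_1=1+3i+4j+7k$.

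The real work is in the last identity \eqref{eq:10}. I would again insert the Binet formula and write the left side as a sum of four products. The decisive simplification is that both mixed terms carry the factor $1+\alpha\beta=0$ and hence vanish, leaving only the pure terms $BL_nBL_m+BL_{n+1}BL_{m+1}=\underline{\alpha}^2\alpha^{n+m}(1+\alpha^2)+\underline{\beta}^2\beta^{n+m}(1+\beta^2)$. Next I would use $1+\alpha^2=\sqrt5\,\alpha$ and $1+\beta^2=-\sqrt5\,\beta$ to pull this into the symmetric shape $\sqrt5\big(\underline{\alpha}^2\alpha^{n+m+1}-\underline{\beta}^2\beta^{n+m+1}\big)$. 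Finally I would compute $\underline{\alpha}^2$ once and for all, after reducing $\alpha^2,\dots,\alpha^6$ via $\alpha^2=\alpha+1$; its four components come out linear in $\alpha$, and $\underline{\beta}^2$ is the same expression in $\beta$. Substituting and applying $\alpha^N-\beta^N=\sqrt5\,f_N$ in each of the four slots turns every component of $\sqrt5(\underline{\alpha}^2\alpha^{n+m+1}-\underline{\beta}^2\beta^{n+m+1})$ into a $\mathbb{Z}$-linear combination of two Fibonacci numbers; a short index shift (for instance $4f_{M+1}+3f_M=2f_{M+3}+f_M$ with $M=n+m+1$) then matches the stated combination of $BF_{n+m+1}$ and the scalar Fibonacci terms, where the last term is read as $2f_{n+m+4}\,ij=2f_{n+m+4}k$.

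The hard part will be the bicomplex square $\underline{\alpha}^2$ together with the subsequent reassembly in \eqref{eq:10}: this is the only place where genuinely new bookkeeping is required rather than a transported scalar identity, and the matching against the asymmetric list of index shifts on the right-hand side must be verified slot by slot. Everything else is either a classical identity carried into the four components or the single constant $\underline{\alpha}\,\underline{\beta}=3(2j+k)$, so once that computation is in hand the remaining identities fall out quickly.
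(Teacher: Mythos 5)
Your route is genuinely different from the paper's. The paper proves only identity \eqref{eq:10} explicitly, by expanding $BL_nBL_m+BL_{n+1}BL_{m+1}$ component by component in the bicomplex basis and feeding each group of terms into the scalar identity $k_nk_m+k_{n+1}k_{m+1}=5f_{m+n+1}$; it dismisses \eqref{eq:3}--\eqref{eq:9} as routine consequences of classical identities. Your Binet-formula treatment is a valid alternative for \eqref{eq:8} and \eqref{eq:10}: the constant $\underline{\alpha}\,\underline{\beta}=3(2j+k)$ is correct and immediately gives \eqref{eq:8}; and in \eqref{eq:10} the mixed terms do vanish because of the factor $1+\alpha\beta=0$, while $1+\alpha^2=\sqrt5\,\alpha$ and $1+\beta^2=-\sqrt5\,\beta$ are correct, so that applying $\alpha^N-\beta^N=\sqrt5 f_N$ slot by slot lands you exactly on the expression $5(f_{n+m+1}-f_{n+m+3}-f_{n+m+5}+f_{n+m+7})+10(f_{n+m+2}-f_{n+m+6})i+10(f_{n+m+3}-f_{n+m+5})j+20f_{n+m+4}k$ that the paper also reaches before its final index shifts. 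Your approach trades the paper's heavy bookkeeping of sixteen Lucas products for one algebraic observation ($1+\alpha\beta=0$) plus the single computation of $\underline{\alpha}^2$, which is arguably cleaner; the componentwise treatment of the linear identities \eqref{eq:3}--\eqref{eq:7} coincides with what the paper implicitly intends.

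There is, however, a genuine gap, and it sits exactly in the step you deferred. Your telescoping for \eqref{eq:9} is sound and yields $\sum_{i=1}^n BL_i^2=BL_nBL_{n+1}-BL_0BL_1$; but carrying out the postponed ``direct multiplication'' gives $BL_0BL_1=(2+i+3j+4k)(1+3i+4j+7k)=15-30i-8j+31k$, which is \emph{not} the constant $5-10i-12j+9k$ appearing in \eqref{eq:9}. Indeed \eqref{eq:9} as printed already fails at $n=1$: one computes $BL_1^2=25-50i-34j+38k$, whereas $BL_1BL_2-(5-10i-12j+9k)=(40-80i-42j+69k)-(5-10i-12j+9k)=35-70i-30j+60k$. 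So your method proves the corrected identity $\sum_{i=1}^n BL_i^2=BL_nBL_{n+1}-(15-30i-8j+31k)$ and in fact refutes the stated one. The fault lies with the paper, which offers no computation for \eqref{eq:9}, but as a proof of the theorem as written your proposal cannot be completed: the unverified constant is precisely where it breaks down, and you should flag the discrepancy rather than assert the match.
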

\begin{proof}
It is only enough to show that equation (\ref{eq:10}) is true because the other equations can be easily seen by the help of the Fibonacci  and Lucas identities that exist in the reference \cite{koshy}.
We note that, for equation (\ref{eq:10}), firstly we need to use the identity $$k_n k_m + k_{n+1} k_{m+1} = 5f_{m+n+1}$$  where $k_n$ is the $n th$ Lucas number. So, let us explicitly calculate the right hand side of the equation (\ref{eq:10}).
$$=[(k_n k_m + k_{n+1} k_{m+1}) - (k_{n+1} k_{m+1} + k_{n+2} k_{m+2})$$
$$-(k_{n+2} k_{m+2} + k_{n+3} k_{m+3})+(k_{n+3} k_{m+3} + k_{n+4} k_{m+4})]$$
$$ +[(k_{n} k_{m+1} + k_{n+1} k_{m})-(k_{n+2} k_{m+3} + k_{n+3} k_{m+2})$$
$$+(k_{n+1} k_{m+2} + k_{n+2} k_{m+1})-(k_{n+3} k_{m+4} + k_{n+4} k_{m+3})]i$$
$$ +[(k_{n} k_{m+2} + k_{n+2} k_{m})-(k_{n+1} k_{m+3} + k_{n+3} k_{m+1})$$
$$+(k_{n+1} k_{m+3} + k_{n+3} k_{m+1})-(k_{n+2} k_{m+4} + k_{n+4} k_{m+2})]j$$
$$+[(k_{n} k_{m+3} + k_{n+3} k_{m})+(k_{n+1} k_{m+2} + k_{n+2} k_{m+1})$$
$$+(k_{n+1} k_{m+4} + k_{n+4} k_{m+1})+(k_{n+2} k_{m+3} + k_{n+3} k_{m+2})]ij.$$
Using the identity $$k_n k_m + k_{n+1} k_{m+1} = 5f_{m+n+1},$$  we have
$$=5(f_{m+n+1}-f_{m+n+3}-f_{m+n+5}+f_{m+n+7})$$
$$+5(f_{m+n+2}+f_{m+n+2}-f_{m+n+6}-f_{m+n+6})i$$
$$+5(f_{m+n+3}+f_{m+n+3}-f_{m+n+5}-f_{m+n+5})j$$
$$+5(f_{m+n+4}+f_{m+n+4}+f_{m+n+4}+f_{m+n+4})ij.$$ \\
Then, making the necessary calculations we get the following result.\\
$$5(2 BF_{n+m+1}+2f_{n+m+4}-f_{n+m+1}-2f_{n+m+6}i-2f_{n+m+5}j+2f_{n+m+4}ij)$$ \\
which is desired. In here, we used the letters $ij$ instead of $k$ so that there would be no confusion.
\end{proof}
Up to now, we give basic properties and some advanced identities including bicomplex Fibonnaci and Lucas numbers. In the next section, we present Horadam numbers which is defined to generalize all second order linear recurrence relations.
In addition, we define bicomplex numbers with coefficients from Horadam sequence and name it as bicomplex Horadam numbers. Furthermore, we show that our definition generalize all of former studies related with bicomplex Horadam numbers.
%
%
%       3.bölüm
%
%
\section{Bicomplex Horadam Numbers}
In \cite{Hor3}, Horadam defined the Horadam numbers as following
\begin{equation} \label{eq:11} w_n = w_n (a, b; p, q) = p w_{n-1} + q w_{n-2}; \,\  n\geq 2,\,\ W_0 = a, \,\ W_1 = b \end{equation}
where $a, b$ and $p, q \in \mathbb{Z}$. There are some papers which are dedicated to generalization of Fibonacci numbers defined on different algebras. Some of them are \cite{Hor3, Hor4, swamy1973, Cris1, halici2013complex, Adn2017, halici2019bicomplex}. Now, using by the Horadam numbers let us define nth bicomplex Horadam number as follows.
\begin{equation} \label{eq:12} BH_n=w_{n} + w_{n+1} i + w_{n+2} j + w_{n+3} k   \end{equation}
For bicomplex Horadam numbers, the recurrence relation and first two elements of these numbers as follows.\\
\begin{equation} \label{eq:13} BH_{n+2}=p BH_{n+1} + q BH_{n}, \end{equation} and
\begin{equation} \label{eq:14} BH_0 = a+ b i + ( pb+qa ) j + (p^2b +pqa + qb ) k   \end{equation}
\begin{equation} \label{eq:15}  BH_1 = b + ( pb+qa ) i + (p^2b +pqa + qb ) j + (p^3b+p^2qa+2pqb+q^2a) k,   \end{equation}\\  respectively.
If we choose values $a=0, b=p=q=1$ and $a=2, b =p = q = 1$ in the above equations, then we get the bicomplex Fibonacci and Lucas numbers' recurrence relation and first two elements, respectively.
Now, let us give Binet formula for bicomplex Horadam numbers and show the formula satisfies the equations (\ref{Binf}) and (\ref{Binl}) for appropriate initial values.
\begin{thm} Binet formula for nth bicomplex Horadam number is\\
\begin{equation} \label{eq:16} BH_{n}=\frac{A\underline{\alpha} \alpha^n - B\underline{\beta}\beta^n}{\alpha - \beta} \end{equation} \\
where\\\\  $ A=b-a \beta, \,\  B=b-a \alpha$ and $\underline{\alpha}=1+\alpha i + \alpha^2 j + \alpha^3 k, \underline{\beta}=1+\beta i + \beta^2 j + \beta^3 k$.
\end{thm}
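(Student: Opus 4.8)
The plan is to reduce the bicomplex statement to the classical scalar Binet formula for the Horadam sequence and then exploit the component structure of $BH_n$. First I would recall that the characteristic equation $x^2 = px + q$ of the recurrence (\ref{eq:11}) has roots $\alpha, \beta$ with $\alpha + \beta = p$ and $\alpha\beta = -q$, and that the scalar Horadam numbers admit the Binet representation
\begin{equation*}
w_n = \frac{A\alpha^n - B\beta^n}{\alpha - \beta}, \qquad A = b - a\beta, \quad B = b - a\alpha.
\end{equation*}
This identity is immediate to confirm from the initial data: substituting $n=0$ gives $(A-B)/(\alpha-\beta) = a$ and $n=1$ gives $(A\alpha - B\beta)/(\alpha-\beta) = b$, while both $\alpha^n$ and $\beta^n$ satisfy the defining recurrence, so the two sides agree for every $n$.

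Next I would substitute this scalar formula into each of the four real-component slots of the definition (\ref{eq:12}), namely $BH_n = w_n + w_{n+1} i + w_{n+2} j + w_{n+3} k$. Writing out the four terms over the common denominator $\alpha - \beta$ and collecting them, the numerator becomes
\begin{equation*}
A\alpha^n(1 + \alpha i + \alpha^2 j + \alpha^3 k) - B\beta^n(1 + \beta i + \beta^2 j + \beta^3 k),
\end{equation*}
since $w_{n+1}$ contributes $\alpha^{n+1} = \alpha \cdot \alpha^n$, $w_{n+2}$ contributes $\alpha^2 \cdot \alpha^n$, and $w_{n+3}$ contributes $\alpha^3 \cdot \alpha^n$, with the analogous powers of $\beta$ gathered into the second group. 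Recognizing the two parenthesized expressions as exactly the definitions of $\underline{\alpha}$ and $\underline{\beta}$ yields $BH_n = (A\underline{\alpha}\alpha^n - B\underline{\beta}\beta^n)/(\alpha - \beta)$, which is (\ref{eq:16}).

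I do not expect a serious obstacle here: the whole argument is a linearity-plus-factoring computation, and the only point needing care is keeping the constants $A$ and $B$ correctly attached to the $\alpha$- and $\beta$-towers, together with the verification of the underlying scalar formula. To close, I would record the consistency check that $a=0$, $b=p=q=1$ forces $A=B=1$ and collapses (\ref{eq:16}) to the bicomplex Fibonacci formula (\ref{Binf}), while $a=2$, $b=p=q=1$ gives $A = \alpha-\beta$ and $B = -(\alpha-\beta)$ and collapses it to the bicomplex Lucas formula (\ref{Binl}). An equivalent self-contained route, should one prefer not to quote the scalar Binet formula, is a short induction: verify that the proposed right-hand side matches $BH_0$ and $BH_1$ from (\ref{eq:14})--(\ref{eq:15}), and then that it satisfies the recurrence (\ref{eq:13}) by invoking $\alpha^2 = p\alpha + q$ and $\beta^2 = p\beta + q$.
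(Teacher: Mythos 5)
Your proposal is correct and follows essentially the same route as the paper: the paper's proof merely states that one uses the roots $\alpha,\beta$ of the characteristic equation $t^2-pt-q=0$ and ``the necessary calculations,'' and your argument---substituting the scalar Binet formula $w_n=(A\alpha^n-B\beta^n)/(\alpha-\beta)$ into each component of $BH_n=w_n+w_{n+1}i+w_{n+2}j+w_{n+3}k$ and factoring out $\underline{\alpha}$ and $\underline{\beta}$---is exactly the computation the paper leaves implicit. Your consistency checks for the Fibonacci and Lucas specializations (including $A=\alpha-\beta$, $B=-(\alpha-\beta)$ when $a=2$, $b=1$) are also correct and mirror the remark the paper makes immediately after the theorem.
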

\begin{proof}
Using the roots of characteristic equation $t^2-pt-q=0$ related to Horadam numbers that is, \\
  $$\alpha = \frac{p+\sqrt{p^2 + 4q}}{2},  \beta = \frac{p-\sqrt{p^2 + 4q}}{2}$$ 
and by doing the necessary calculations we obtain the following equation;
$$BH_{n}=\frac{A\underline{\alpha} \alpha^n - B\underline{\beta}\beta^n}{\alpha - \beta}$$
which is desired.
\end{proof}
If you are careful, the equation (\ref{eq:16}) is a generalization of Binet formula for bicomplex Fibonacci and Lucas numbers. In fact, if we choose the values $a, b$ and $p, q$ as $0, 1 $ and $1, 1$  respectively, then we have the equation (\ref{Binf}). Also, by typing the values $a=2, b=1$ and $ p= 1, q=1$  the equation  (\ref{Binl}) is obtained.
\begin{thm}
Generating function for Bicomplex Horadam numbers is
\begin{equation} \label{generating}
g(t)=\frac{BH_0+(BH_1 - p BH_0)t}{1-pt-q t^2}.
\end{equation}
\end{thm}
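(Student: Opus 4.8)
The plan is to mirror the generating-function argument already used for bicomplex Lucas numbers in Section~2, since the only structural input is the second-order recurrence (\ref{eq:13}) together with the initial data (\ref{eq:14}) and (\ref{eq:15}). First I would set $g(t)=\sum_{n=0}^{\infty} BH_n t^n$ as a formal power series with bicomplex coefficients; this is legitimate because $\mathbb{BC}$ is a commutative ring, so termwise addition and the index shifts below cause no difficulty.

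Next I would form the two shifted series $pt\,g(t)=\sum_{n=0}^{\infty} p\,BH_n t^{n+1}$ and $qt^2 g(t)=\sum_{n=0}^{\infty} q\,BH_n t^{n+2}$, reindex each so that it is written in powers of $t^n$, and subtract both from $g(t)$ to obtain $(1-pt-qt^2)g(t)$. The key step is then to read off the coefficient of $t^n$ in this combination: for $n\geq 2$ it equals $BH_n-p\,BH_{n-1}-q\,BH_{n-2}$, which vanishes identically by (\ref{eq:13}); the $n=0$ term contributes $BH_0$, and the $n=1$ term contributes $BH_1-p\,BH_0$. Hence $(1-pt-qt^2)g(t)=BH_0+(BH_1-p\,BH_0)t$, and dividing by $1-pt-qt^2$ yields the claimed formula (\ref{generating}).

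The computation carries no genuine obstacle; the only point demanding care is the index bookkeeping in the two shifted sums, ensuring the lower summation limits are handled so that exactly the $n=0$ and $n=1$ terms survive while the recurrence annihilates the entire tail. As a consistency check I would substitute the explicit initial values (\ref{eq:14}) and (\ref{eq:15}): with the choice $a=2,\,b=p=q=1$ the numerator should collapse to $2+i+3j+4k+(-1+2i+j+3k)t$, recovering the bicomplex Lucas generating function (\ref{eq:1}) and confirming that the general formula specializes correctly.
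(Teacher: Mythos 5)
Your proposal is correct and follows essentially the same route as the paper: both form $g(t)$, $pt\,g(t)$, and $qt^2 g(t)$, combine them so the recurrence (\ref{eq:13}) annihilates all terms of degree at least two, and read off the numerator $BH_0+(BH_1-p\,BH_0)t$. Your version is in fact more explicit about the coefficient extraction and index bookkeeping than the paper's brief sketch, but the underlying argument is identical.
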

\begin{proof}
First, let us write the generating function for these numbers as follows
$$g(t)=\sum ^{\infty} _{n=0} BH_{n} t^n .$$ Then, to find the desired form of the $g(t)$ let us calculate the equations for $ptg(t)$ and $qt^2g(t)$;
\begin{equation}
ptg(t)=\sum ^{\infty} _{n=0} pBH_{n} t^{n+1} \,\,\,\,\, and \,\,\,\,\,  qt^2g(t)=\sum ^{\infty} _{n=0} qBH_{n} t^{n+2}
\end{equation}
Also, if we take advantage of the characteristic equation of these numbers, $t^2-pt-q=0$, we find an explicit form for the generating function $g(t)$ as follows.
$$g(t)=\frac{BH_0+(BH_1 - p BH_0)t}{1-pt-q t^2}$$
which is desired. Thus, the proof is completed.
\end{proof}
Note that if the initial values of Fibonacci numbers are used in the above equation (\ref{generating}), the formula of generating function for bicomplex Fibonacci numbers is obtained.
In particular, it is not difficult to see that the following formula can be obtained if the initial values of Lucas numbers are to be selected $$\frac{BL_0+(BL_1 - BL_0)t}{1-t-t^2}.$$
\\\\
Now in the following theorem, we give the identity which is an important generalization of Cassini identity and called as Catalan identity.
So, we will see later that we can get the Catalan identity for all bicomplex Fibonacci-like numbers using the formula we mentioned.
\begin{thm}
The Catalan identity for bicomplex Horadam numbers is
\begin{equation} \label{eq:17}
BH^2_n - BH_{n+r}BH_{n-r}=\frac{AB \, \underline{\alpha}\underline{\beta} \, (-q)^{n-r}}{p^2+4q} (\alpha^r - \beta^r)^2.
\end{equation}
\end{thm}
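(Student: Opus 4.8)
The plan is to mimic the proof of the Catalan identity for bicomplex Lucas numbers (equation~(\ref{eq:2})) but starting from the more general Binet formula~(\ref{eq:16}). First I would substitute the Binet expressions $BH_n = \frac{A\underline{\alpha}\alpha^n - B\underline{\beta}\beta^n}{\alpha-\beta}$ into the three terms $BH_n^2$, $BH_{n+r}$, and $BH_{n-r}$. Expanding $BH_n^2$ produces four products with coefficients $A^2\underline{\alpha}^2$, $B^2\underline{\beta}^2$, and two cross terms $-AB\,\underline{\alpha}\,\underline{\beta}\,(\alpha\beta)^n$; expanding $BH_{n+r}BH_{n-r}$ produces the analogous four products, where the pure $\alpha$ and $\beta$ squared terms coincide exactly with those in $BH_n^2$ (since $\alpha^{n+r}\alpha^{n-r}=\alpha^{2n}$, etc.).

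The key step is that upon taking the difference $BH_n^2 - BH_{n+r}BH_{n-r}$, the $A^2\underline{\alpha}^2\alpha^{2n}$ and $B^2\underline{\beta}^2\beta^{2n}$ terms cancel, leaving only the mixed terms. Collecting those gives
\begin{equation}
BH_n^2 - BH_{n+r}BH_{n-r} = \frac{-AB\,\underline{\alpha}\,\underline{\beta}}{(\alpha-\beta)^2}\Big[2(\alpha\beta)^n - \alpha^{n+r}\beta^{n-r} - \alpha^{n-r}\beta^{n+r}\Big].
\end{equation}
I would then factor $(\alpha\beta)^{n-r}$ out of the bracket to obtain $(\alpha\beta)^{n-r}\big(2(\alpha\beta)^r - \alpha^{2r} - \beta^{2r}\big) = -(\alpha\beta)^{n-r}(\alpha^r-\beta^r)^2$. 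Using the root relations $\alpha\beta = -q$ and $(\alpha-\beta)^2 = p^2+4q$, the prefactor becomes $\frac{AB\,\underline{\alpha}\,\underline{\beta}\,(-q)^{n-r}}{p^2+4q}$, and the bracket contributes $(\alpha^r-\beta^r)^2$, which matches~(\ref{eq:17}) exactly.

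The main obstacle is purely bookkeeping: one must be careful that the bicomplex units commute (as emphasized in the introduction and unlike quaternions), so $\underline{\alpha}\,\underline{\beta}$ can be treated as an ordinary product and the cross terms combine without sign subtleties from noncommutativity. The remaining work is the elementary algebraic identity $2(\alpha\beta)^r - \alpha^{2r} - \beta^{2r} = -(\alpha^r-\beta^r)^2$, which I would verify by direct expansion. Unlike the Lucas case, here I would \emph{not} attempt to simplify $\underline{\alpha}\,\underline{\beta}$ into an explicit combination of $1, i, j, k$, since the factors $A$, $B$, and the discriminant $p^2+4q$ depend on the arbitrary parameters $a,b,p,q$; leaving the answer in the compact factored form~(\ref{eq:17}) is both natural and the correct level of generality. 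As a sanity check, I would confirm that specializing to the Lucas values ($A=B=1$ after the appropriate normalization, $\underline{\alpha}\,\underline{\beta}$ evaluated, $p=q=1$) recovers the earlier identity~(\ref{eq:2}) with its factor $15(-1)^{n-r}f_r^2(2j+k)$.
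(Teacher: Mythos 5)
Your proposal is correct and is essentially the paper's own proof: substitute the Binet formula (\ref{eq:16}), cancel the $A^2\underline{\alpha}^2\alpha^{2n}$ and $B^2\underline{\beta}^2\beta^{2n}$ terms, factor the surviving mixed terms as $-(\alpha\beta)^{n-r}(\alpha^r-\beta^r)^2$, and finish with the root relations $\alpha\beta=-q$ and $(\alpha-\beta)^2=p^2+4q$; the paper simply compresses your middle steps into ``some adjustments and calculations.'' One warning about your final sanity check: specializing (\ref{eq:17}) to the Lucas data ($AB=-5$, $\underline{\alpha}\,\underline{\beta}=3(2j+k)$, $(\alpha^r-\beta^r)^2=5f_r^2$, $p^2+4q=5$) yields $-15(-1)^{n-r}f_r^2(2j+k)$ rather than $+15(-1)^{n-r}f_r^2(2j+k)$, and the discrepancy is a sign error in the paper's equation (\ref{eq:2}) itself (direct computation gives $BL_1^2-BL_2BL_0=-15(2j+k)$), so you should not ``fix'' your derivation to match it.
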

\begin{proof} Let us use the Binet formula which we found for Horadam numbers on the Catalan identity. Therefore, we can write an equation such as
$$\Big( \frac{A\underline{\alpha} \alpha^n - B\underline{\beta}\beta^n}{\alpha - \beta} \Big) ^2-\frac{A\underline{\alpha} \alpha^{n+r} - B\underline{\beta}\beta^{n+r}}{\alpha - \beta} \frac{A\underline{\alpha} \alpha^{n-r} - B\underline{\beta}\beta^{n-r}}{\alpha - \beta},$$
If we make some adjustments and calculations in the last equation above, then we have the following formula.
$$\frac{1}{(\alpha-\beta)^2} (AB \, \underline{\alpha}\underline{\beta} \, (-q)^{n-r})(\alpha^r - \beta^r)^2.$$
With the help of the properties $\alpha$ and $\beta$, we get the following equality.
$$ BH^2_n - BH_{n+r}BH_{n-r}= \frac{AB \, \underline{\alpha}\underline{\beta} \, (-q)^{n-r}}{p^2+4q} (\alpha^r - \beta^r)^2 .$$
Thus, the proof is completed.
\end{proof}
Next we will give an important equality which is a special case of the Catalan identity and relates the matrix theory to the recurrence relations.
\begin{thm}
The Cassini identity for bicomplex Horadam numbers is
\begin{equation}
BH^2_n - BH_{n+1}BH_{n-1}={AB \, \underline{\alpha}\underline{\beta} \, (-q)^{n-1}}.
\end{equation}
\end{thm}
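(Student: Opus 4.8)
The plan is to obtain the Cassini identity directly as the $r=1$ specialization of the Catalan identity established in the previous theorem, equation (\ref{eq:17}). The substantive analytic work — expanding the Binet expressions, collapsing the cross terms, and packaging the result into the compact form carrying the factor $AB\,\underline{\alpha}\underline{\beta}\,(-q)^{n-r}$ — has already been carried out there, so the only remaining task is to evaluate the scalar factor $(\alpha^r-\beta^r)^2/(p^2+4q)$ at $r=1$.

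First I would set $r=1$ in (\ref{eq:17}), which yields
\[
BH^2_n - BH_{n+1}BH_{n-1}=\frac{AB\,\underline{\alpha}\underline{\beta}\,(-q)^{n-1}}{p^2+4q}(\alpha-\beta)^2.
\]
The key step is then to recognize that the leftover scalar $(\alpha-\beta)^2/(p^2+4q)$ equals $1$. This follows from Vieta's relations for the characteristic equation $t^2-pt-q=0$, namely $\alpha+\beta=p$ and $\alpha\beta=-q$, which give
\[
(\alpha-\beta)^2=(\alpha+\beta)^2-4\alpha\beta=p^2+4q.
\]
Substituting this into the previous display cancels the denominator against the numerator factor and produces exactly
\[
BH^2_n - BH_{n+1}BH_{n-1}=AB\,\underline{\alpha}\underline{\beta}\,(-q)^{n-1},
\]
as claimed.

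I do not anticipate any genuine obstacle here, since everything reduces to a single algebraic simplification; the only point requiring care is to keep the bicomplex coefficient $\underline{\alpha}\underline{\beta}$ inert throughout, treating it as a fixed factor unaffected by the scalar cancellation. Should one prefer a self-contained derivation that does not invoke the Catalan identity, the alternative would be to re-establish the $r=1$ case from the Binet formula (\ref{eq:16}) directly — substituting $n\mapsto n\pm1$, expanding $BH_n^2-BH_{n+1}BH_{n-1}$, and using $\alpha\beta=-q$ to factor out $(-q)^{n-1}$ — but this merely reproduces the computation already performed for the Catalan identity and offers no advantage.
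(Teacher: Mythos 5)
Your proposal is correct and follows exactly the paper's own route: the paper likewise obtains the Cassini identity by setting $r=1$ in the Catalan identity (\ref{eq:17}). In fact you are slightly more complete than the paper, since you explicitly verify via Vieta's relations that $(\alpha-\beta)^2=p^2+4q$, which is the cancellation the paper leaves implicit.
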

\begin{proof}
The proof of this theorem can easily be seen as an oversimplification of the catalan equation. For this purpose, in equation (\ref{eq:17}), it is enough to write $r=1$. As a result, it can be seen that the Catalan identity is generalized to the Cassini identity.
\end{proof}

\section{Conclusion}
In this work we have done, we first described the bicomplex numbers with coefficients from the Fibonacci and Lucas sequences. We have given many equations that hold an important place in the literature on these numbers. We then used the Horadam numbers to define a new set of numbers that generalizes all these types of numbers, and we call it bicomplex Horadam numbers. We have also gave some important identities for these numbers. For further studies, we plan to find some additional identities and properties for these new numbers.
\bibliographystyle{acm}
\bibliography{mybibfile}
\end{document}